\def\C{\mathbb{C}}
\def\R{\mathbb{R}}
\def\D{\mathbb{D}}
\newcommand{\ZZ}{\mathbb{Z}}
\newtheorem{theorem}{Theorem}[section]
\newtheorem{lemma}[theorem]{Lemma}
\newtheorem{corollary}[theorem]{Corollary}
\theoremstyle{definition}
\newtheorem{remark}[theorem]{Remark}
\newtheorem{definition}[theorem]{Definition}
\title[EXISTENCE OF COMPATIBLE CONTACT STRUCTURES ON $G_2-$MANIFOLDS]
{EXISTENCE OF COMPATIBLE CONTACT STRUCTURES ON $G_2-$MANIFOLDS}
\author{M. Firat Arikan}
\address{Department of Mathematics, University of Rochester, Rochester NY, USA}
\email{arikan@math.rochester.edu}
\thanks{The first named author is partially supported by NSF FRG grant DMS-1065910}
\author{Hyunjoo Cho}
\address{Department of Mathematics, University of Rochester, Rochester NY, USA}
\email{cho@math.rochester.edu}
\author{Sema Salur}
\address{Department of Mathematics, University of Rochester, Rochester NY, USA}
\email{salur@math.rochester.edu}
\thanks{The third named author is partially supported by NSF grant DMS-1105663}
\subjclass[2000]{53C38,53D10,53D15,57R17}
\keywords{(Almost) contact structures, $G_2$ structures}
\begin{document}
\begin{abstract}
In this paper, we show the existence of (co-oriented) contact structures
on certain classes of $G_2$-manifolds, and that these two structures
are compatible in certain ways. Moreover, we prove that any
seven-manifold with a spin structure (and so any manifold with $G_2$-structure)
admits an almost contact structure. We also construct explicit almost contact metric
structures on manifolds with $G_2$-structures.
\end{abstract}

\maketitle

%
%

\section{Introduction}

Let $(M,g)$ be a Riemannian $7$-manifold whose holonomy group $Hol(g)$ is the exceptional Lie group $G_2$ (or, more generally, a subgroup of $G_2$). Then $M$ is naturally equipped with a covariantly  constant $3$-form $\varphi$ and $4$-form $*\varphi$. We can define $(M,\varphi,g)$ as the $G_2$-manifold with $G_2$ structure $\varphi$.

We can also define a (co-oriented) contact manifold as a pair $(N,\xi)$ where $N$ is an odd dimensional manifold and $\xi$, called a (co-oriented) contact structure, is a totally non-integrable (co-oriented) hyperplane distribution on $N$.

In dimension 7, so far contact geometry and $G_2$ geometry have
been studied independently and each geometry has very distinguished characteristics which are rather different than those in the other. A basic example of such differences is the following: In contact geometry there are no local invariants, in other words, every contact $7$-manifold is locally contactomorphic to $\R^7$ equipped with the standard contact structure. On the other hand, in $G_2$ geometry it is the $G_2$ structure itself that determines how local neighborhoods of points look like, and as a result, manifolds with $G_2$ structures can look the same only at a point, \cite{Geiges}, \cite{Karigiannis}.

The aim of this paper is to initiate a new interdisciplinary research area between contact and $G_2$ geometries. More precisely, we study the existence of (almost) contact structures on 7-dimensional manifolds with (torsion free) $G_{2}$-structures.

The paper is organized as follows: After the preliminaries (Section \ref{sec:Preliminaries}), we show the existence of almost contact structures on $7$-manifolds with spin structures in Section \ref{sec:Almost_contact}. In particular, we prove the following theorem:

\vspace{.1in}

\noindent {\bf Theorem:} {\em Every manifold with $G_2$-structure admits an almost contact structure.}

\vspace{.1in}

In Section \ref{sec:Compatibility_Motivating example}, we define $A$- and $B$-compatibility between contact and $G_{2}$ structures, and also present the motivating example for $\mathbb{R}^7$. We also prove the nonexistence result:

\vspace{.1in}

\noindent {\bf Theorem:} {\em Let $(M,\varphi)$ be a manifold with $G_2$-structure such that $d\varphi=0$. If $M$ is closed (i.e., compact and $\partial M= \emptyset$), then there is no contact structure on $M$ which is A-compatible with $\varphi$.}

\vspace{.1in}

In Section \ref{sec:Explicit_Almost_Contact}, for any non-vanishing vector field $R$ on a manifold $M$ with $G_2$-structure $\varphi$, we construct explicit almost contact structure, denoted by $(J_R,R,\alpha_{R},g_{\varphi})$, and indeed prove the following theorems:

\vspace{.1in}

\noindent {\bf Theorem:} {\em  Let $(M,\varphi)$ be a manifold with $G_2$-structure. Then the quadruple $(J_R,R,\alpha_{R},g_{\varphi})$ defines an almost contact metric structure on $M$
for any non-vanishing vector field $R$ on $M$. Moreover, such a structure exists on any manifold with $G_2$-structure.}

\vspace{.1in}

\noindent {\bf Theorem:} {\em Let $(M,\varphi)$ be a manifold with $G_{2}$-structure.
Suppose that $\xi$ is a contact structure on $M$ such that $(J_{R},R, \alpha_{R},
g_{\varphi})$ is an associated almost contact metric structure for
$\xi$. Then $\xi$ is $A$-compatible.}

\vspace{.1in}

In Section \ref{sec:Contact$-G_2-$structures}, we define contact$-G_2-$structures on $7$-manifolds and analyze their relations with $A$-compatible contact structures, the main results of that section are:
\vspace{.1in}

\noindent {\bf Theorem:} {\em Let $(M,\varphi)$ be a manifold with $G_2$-structure. Assume that there are nowhere-zero vector fields $X$, $Y$ and $Z$ on $M$ satisfying
$\iota _Z \varphi=Y^{\flat} \wedge X^{\flat}$
where $X^{\flat}$ (resp. $Y^{\flat}$) is the covariant 1-form of $X$ (resp. $Y$) with respect to the $G_2$-metric $g_{\varphi}$. Also suppose that
$d (i_{X}i_{Y}\varphi)=i_{X}i_{Y}\ast \varphi.$
Then the 1-form $\alpha := Z^{\,\flat}=g_{\varphi}(Z,\cdot)$ is a contact form on $M$ and it  defines an $A$-compatible contact structure $\emph{Ker}(\alpha)$ on $(M,\varphi)$.}

\vspace{.1in}

\noindent {\bf Theorem:} {\em Let $(\varphi, R, \alpha,f,g)$ be a contact$-G_2-$structure on a smooth manifold $M^7$. Then $\alpha$ is a contact form on $M$. Moreover, $\xi=$\emph{Ker}$(\alpha)$ is an A-compatible contact structure on $(M,\varphi)$. In particular, if $M$ is closed, then it does not admit a contact$-G_2-$structure with $d\varphi=0$.}

\vspace{.1in}

\noindent {\bf Theorem:} {\em Let $(M,\varphi)$ be any manifold with $G_2$-structure. Then every A-compatible contact structure on $(M,\varphi)$ determines a contact$-G_2-$structure on $M$.}

\vspace{.1in}

Finally, in Section \ref{sec:Examples}, we present some examples of $A$-compatible structures and contact$-G_2-$structures.

%
%

\section{Preliminaries} \label{sec:Preliminaries}

\subsection{$G_2$-structures and $G_2$-manifolds}
A smooth $7$-dimensional manifold $M$ has a {\it $G_2$-structure},
if the structure group of $TM$ can be reduced to $G_2$. The group
$G_2$ is one of the five exceptional Lie groups which is the group
of all linear automorphisms of the imaginary octonions $im
\mathbb{O}\cong \mathbb{R}^7$ preserving a certain cross product.
Equivalently, it can be defined as the subgroup of
$GL(7,\mathbb{R})$ which preserves the $3$-form
$$\varphi_0=e^{123}+e^{145}+e^{167}+e^{246}-e^{257}-e^{347}-e^{356}$$
where $(x_1,...,x_7)$ are the coordinates on $\mathbb{R}^7$, and
$e^{ijk}=dx^i \wedge dx^j \wedge dx^k$. As an equivalent definition,
a manifold with a $G_2$-structure $\varphi$ is a pair $(M,\varphi)$,
where $\varphi$ is a $3$-form on $M$, such that $(T_pM,\varphi)$ is
isomorphic to $(\mathbb{R}^7,\varphi_0)$ at every point $p$ in $M$.
Such a $\varphi$ defines a Riemannian metric $g_{\varphi}$ on $M$. We say
$\varphi$ is \emph{torsion-free} if $\nabla \varphi=0$ where
$\nabla$ is the Levi-Civita connection of $g_{\varphi}$. A Riemannian manifold
with a torsion free $G_2$-structure is called a {\it
$G_2$-manifold}. Equivalently, the pair $(M,\varphi)$ is called a
$G_2$-manifold if its holonomy group (with respect to $g_{\varphi}$) is a
subgroup of $G_2$. As an another characterization, one can show that
$\varphi$ is torsion-free if and only if $d\varphi=d(\ast\varphi)=0$
where $``\ast"$ is the Hodge star operator defined by the metric
$g_{\varphi}$.\\

The $3$-form $\varphi$ also determines the cross product and the
orientation top (volume) form Vol on $M$. In fact, for any vector
fields $u,v,w$ on $M$, we have
\begin{equation} \label{eqn:Cross_Product}
\varphi(u,v,w)=g_{\varphi}(u \times v,w),
\end{equation}
\begin{equation} \label{eqn:Metric_Volume}
(\iota_u \varphi) \wedge (\iota_v \varphi) \wedge \varphi=6g_{\varphi}(u,v)\;
\textrm{Vol}.
\end{equation}
Also we will make use of the following formula as well:
\begin{equation} \label{eqn:Double_Cross_Product}
u \times (u \times v)=-\|u\|^2v+g_{\varphi}(u,v)u.
\end{equation}

See \cite{Bryant}, \cite{Bryant2}, \cite{Karigiannis}  and \cite{Joyce} for more details
on $G_2$ geometry.\\

\subsection{Contact and almost contact structures}
A \emph{contact structure} on a smooth
$(2n+1)$-dimensional manifold $M$ is a global $2n$-plane field
distribution $\xi$ which is totally non-integrable.
Non-integrability condition is equivalent to the fact that locally
$\xi$ can be given as the kernel of a 1-form $\alpha$ such that
$\alpha \wedge (d\alpha)^n \neq0$. If $\alpha$ is globally defined
(in such a case, it is called a \emph{contact form}), then one can
define the \emph{Reeb vector field} of $\alpha$ to be the unique global nowhere-zero
vector field $R$ on $M$ satisfying the equations
\begin{equation} \label{eqn:Defining_Reeb}
\iota_Rd\alpha=0, \quad \alpha(R)=1
\end{equation}
where $`` \iota "$ denotes the interior product.\\

Using $R$, we can co-orient $\xi$ and, as a result, the structure
group of the tangent frame bundle can be reduced to $U(n) \times 1$.
Such a reduction of the structure group is called an \emph{almost
contact structure} on $M$. Therefore, for the existence of a
co-oriented contact structure on $M$, one should first ask the
existence of an almost contact structure. We refer the reader to
\cite{Blair} and \cite{Geiges} for more on contact geometry.

\begin{definition} [\cite{GrayJW}] \label{def:Almost_contact_1}
Let $M^{2n+1}$ be a smooth manifold. If the structure group of its
tangent bundles $TM^{2n+1}$ reduces to $U(n)\times 1$, then
$M^{2n+1}$ is said to have  \emph{an almost contact structure}.
\end{definition}

%
%

\section{Almost contact structures on $7$-manifolds with a spin structure} \label{sec:Almost_contact}

Although no explicit description is given, nevertheless the
following result shows the existence of almost contact structures
not only on manifolds with $G_2$-structures but also on a much wider
family of 7-manifolds. Recall that if a manifold admits a spin
structure, then its second Stiefel-Whitney class is zero.

\begin{theorem} \label{thm:Spin_almost_contact}
Every $7$-manifold with a spin structure admits an almost contact
structure.
\end{theorem}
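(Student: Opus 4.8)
The plan is to realize an almost contact structure as a two-step reduction of the structure group of $TM$ and then run obstruction theory, invoking the spin hypothesis only at the very end to kill the single surviving obstruction. First I would fix a Riemannian metric, so that the structure group is $SO(7)$ (a spin structure forces $w_1=0$, hence $M$ is orientable). Since $\dim M = 7$ is odd, $M$ carries a nowhere-zero vector field: the Euler class of $TM$ is the sole obstruction to such a field, and it vanishes because $\langle e(TM),[M]\rangle=\chi(M)=0$ when $M$ is closed, while a noncompact $M$ always admits a nowhere-zero field. This splits $TM \cong \underline{\RR} \oplus E$ with $E$ an oriented rank-$6$ subbundle, and a $U(3)\times 1$ reduction of $TM$ (i.e.\ an almost contact structure) is then equivalent to a complex structure on $E$, that is, a reduction of the $SO(6)$ structure on $E$ to $U(3)$.

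Such a reduction is a section of the bundle over $M$, associated to the frame bundle of $E$, whose fiber is $SO(6)/U(3) \cong \CP^3$, so I would analyze its existence by the standard obstruction theory. Because $\CP^3$ is simply connected with $\pi_2(\CP^3)\cong\Z$, the primary obstruction lies in $H^3(M;\Z)$; and from the Hopf fibration $S^1\hookrightarrow S^7\to \CP^3$ one reads off $\pi_i(\CP^3)\cong\pi_i(S^7)=0$ for $3\le i\le 6$. Since $\dim M=7$, every higher obstruction lives in a group $H^{i+1}(M;\pi_i(\CP^3))$ with $3\le i\le 6$, all of which vanish. Hence the only genuine obstruction is the primary one in $H^3(M;\Z)$.

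It then remains to identify and kill this primary obstruction. The classical computation shows that the primary obstruction to a complex structure on an oriented rank-$6$ bundle $E$ is the integral third Stiefel--Whitney class $W_3(E)=\beta(w_2(E))$, the Bockstein of $w_2(E)$. Since $TM\cong\underline{\RR}\oplus E$ gives $w(E)=w(TM)$, and a spin structure forces $w_2(M)=0$, we obtain $w_2(E)=0$ and therefore $W_3(E)=\beta(0)=0$. The section thus exists, producing the desired complex structure on $E$ and hence the almost contact structure on $M$. I expect the main obstacle to be the topological bookkeeping of the middle step---correctly identifying the fiber as $\CP^3$, checking that its homotopy vanishes throughout the relevant range, and matching the degree-$3$ obstruction with $W_3$---after which the spin hypothesis finishes the argument at once.
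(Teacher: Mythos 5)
Your proof is correct, but it follows a genuinely different route from the paper's. The paper performs the reduction $SO(7)\to U(3)\times 1$ in one step, via sections of the associated bundle with fiber $SO(7)/U(3)$; since that fiber has nonvanishing homotopy in degrees $2$ and $6$, two obstructions survive, $o^3\in H^3(M;\ZZ)$ and $o^7\in H^7(M;\ZZ)$, and the paper invokes Massey's identification of both as integral Stiefel--Whitney classes (hence $2$-torsion), kills $o^3$ by the spin hypothesis, and disposes of $o^7$ by a case analysis showing $H^7(M;\ZZ)$ has no $2$-torsion (closed, with boundary, or open). You instead split off a trivial line first, reducing the problem to a $U(3)$-structure on the rank-$6$ complement $E$, where the fiber is $SO(6)/U(3)\cong\CP^3$ with $\pi_i=0$ for $3\le i\le 6$; this leaves only the primary obstruction $W_3(E)=W_3(M)=\beta(w_2(M))=0$. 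The gain is that the awkward top-degree obstruction disappears from the reduction step and is replaced by the classical Euler-class argument for a nowhere-zero vector field; the cost is that the top-degree case analysis simply migrates there, and you should also cover the compact-with-nonempty-boundary case, which your dichotomy ``closed or noncompact'' omits (it is harmless: the obstruction to a line field lies in $H^7(M;\ZZ)\cong H_0(M,\partial M;\ZZ)=0$ by Lefschetz duality, exactly the duality the paper uses for $o^7$). With that small addition your argument is complete and arguably more transparent, since it needs only $\pi_*(\CP^3)$ and the standard identification of the primary obstruction to a complex structure with $W_3$, rather than Massey's computation of both obstructions for $SO(7)/U(3)$.
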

\begin{proof}
Assume that $M$ is a $7$-manifold with spin structure. By
definition, $M$ admits an almost contact structure if and only if
the structure group of $TM$ can be reduced to  $U(3)\times 1$.
Equivalently, the associated fiber bundle $TM[SO(7) / U(3)]$ with
fiber $ SO(7)/U(3)$ admits a cross-section \cite{Steenrod}. If
$s$ is a cross section of fiber bundle over the the $(i-1)$-skeleton
of $M$, then the cohomology class
\begin{equation*}
 \begin{CD}
 o^{i}(s)\in H^{i}(M,\pi_{i-1}(SO(7)/U(3)).
 \end{CD}
\end{equation*}
is the obstruction to extending $s$ over the $i$-skeleton. Since we
have $$\pi_i(SO(7) / U(3))= 0$$ unless $i=2,6$, the only
obstructions to the existence of such a cross section arise in
$H^i(M, \ZZ)$ for $i=3,7$. In \cite{Massey}, Massey shows that these
obstructions are the integral Stiefel-Whitney classes of the
associated dimensions. Recall that the integral Stiefel-Whitney
classes are defined as the images $\beta(w_i)$ of the
Stiefel-Whitney classes under the Bockstein homomorphism. Here the
Bockstein homomorphism is the connecting homomorphism
$\beta:H^{i}(M, \ZZ / 2 \ZZ ) \rightarrow H^{i+1}(M, \ZZ ) $ which
arises from the short exact sequence
\begin{equation*}
\begin{CD}
0 @>>> \ZZ @> \times 2 >> \ZZ @>>> \ZZ / 2 \ZZ @>>>0.
\end{CD}
\end{equation*}
Therefore, the obstructions $o^3, o^7$ to the existence of an almost
contact structures on $7$-manifolds are 2-torsion classes.

Now we know that $w_2(M)=0$ (since $M$ is spin), and hence the third
integral Stiefel-Whitney class vanishes, i.e., $o^3=W_3(M)=
\beta(w_2)=0$. Therefore, the only obstruction lies in the
cohomology group $H^7(M)$.

We consider the following cases: First, if $M$ is a closed manifold,
then by Poincar{\'e} duality $H^7(M) \cong H_0(M) \cong \ZZ$ and
hence the top dimensional obstruction $o^7$ vanishes. Secondly, if
$M$ has a boundary, then (again by the duality) we have $o^7 \in
H^7(M) \cong H_0(M,\partial M) \cong 0$. Now, if $M$ is non-compact
without a boundary, then the cohomology group $H^7(M)\cong
(H_{cs}^0(M))^{\ast}$ where $H_{cs}$ denotes the compactly supported
cohomology. Hence, it is torsion-free.

\end{proof}

Since every manifold with $G_2$-structure is spin, we get

\begin{corollary} \label{cor:G_2_almost_contact}
Every manifold with $G_2$-structure admits an almost contact
structure.\qed
\end{corollary}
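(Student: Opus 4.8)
The plan is to deduce the final statement—that every manifold with a $G_2$-structure admits an almost contact structure—directly from the theorem that immediately precedes it, namely that every spin $7$-manifold admits an almost contact structure. The essential observation is that the existence of a $G_2$-structure is a stronger condition than being spin: a $G_2$-structure reduces the structure group of $TM$ all the way to $G_2$, and since $G_2$ is simply connected it lifts through $\mathrm{Spin}(7)\to SO(7)$, so any manifold carrying a $G_2$-structure is automatically orientable and spin. Thus the corollary is a one-line consequence, and the only real content to verify is this implication ``$G_2$-structure $\Rightarrow$ spin.''

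First I would recall the definition given in the preliminaries: a $G_2$-structure is a reduction of the structure group of $TM$ to $G_2\subset SO(7)$. The key homotopy-theoretic fact I would invoke is that $G_2$ is a connected, simply connected compact Lie group, so $\pi_1(G_2)=0$ and $G_2$ embeds in $\mathrm{Spin}(7)$ as a genuine subgroup (equivalently, the composite classifying map $M\to BG_2\to BSO(7)$ lifts to $B\mathrm{Spin}(7)$ because the obstruction $w_2$ pulls back trivially from $BG_2$, where $H^2(BG_2;\Z/2)=0$). Concretely, a reduction to $G_2$ gives a reduction to $\mathrm{Spin}(7)$, which is precisely a spin structure, so $w_2(M)=0$.

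Once the spin condition is in hand, I would simply apply Theorem \ref{thm:Spin_almost_contact}: since $M$ is spin, its second Stiefel--Whitney class vanishes, the obstruction $o^3=\beta(w_2)$ in $H^3(M,\Z)$ dies, and the preceding case analysis on $H^7(M)$ handles the top obstruction $o^7$ in every case (closed, with boundary, or open). Therefore $M$ admits an almost contact structure, which is exactly the assertion of the corollary.

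I expect no serious obstacle here, since the statement is a corollary rather than an independent theorem; the whole argument is the short remark (already flagged in the excerpt) that ``every manifold with $G_2$-structure is spin.'' The only point requiring care is making the reduction $G_2\subset\mathrm{Spin}(7)$ explicit enough to justify that a $G_2$-structure furnishes an honest spin structure rather than merely an orientation; invoking the simple connectivity of $G_2$, or equivalently the vanishing of $w_2$ on $BG_2$, closes that gap cleanly and lets the corollary follow immediately from the theorem.
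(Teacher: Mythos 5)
Your proposal is correct and follows exactly the paper's route: the paper deduces the corollary in one line from Theorem \ref{thm:Spin_almost_contact} together with the (asserted) fact that every manifold with $G_2$-structure is spin. Your justification of that fact via the simple connectivity of $G_2$ and the lift through $\mathrm{Spin}(7)\to SO(7)$ is a correct filling-in of a detail the paper leaves implicit.
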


%
%

\section{Compatibility and the motivating example} \label{sec:Compatibility_Motivating example}

Assuming the existence of a contact structure on a manifold with a
$G_2$-structure, we can also ask if and how these two
structures are related. We define \textit{two} different notions of
\emph{compatibility} between them as follows:

\begin{definition} \label{def:A-Compatibility}
A (co-oriented) contact structure $\xi$ on $(M,\varphi)$ is said to
be \emph{A-compatible} with the $G_2$-structure $\varphi$ if there exist a vector field $R$ on $M$ and a nonzero function $f:M\rightarrow \mathbb{R}$ such that $d\alpha=\iota_R \varphi$ for some contact form $\alpha$ for $\xi$ and $fR$ is the Reeb vector field of a contact form for $\xi$.
\end{definition}

\begin{definition} \label{def:B-Compatibility}
A (co-oriented) contact structure $\xi$ on $(M,\varphi)$ is said to
be \emph{B-compatible} with the $G_2$-structure $\varphi$ if there
are (global) vector fields $X$, $Y$ on $M$ such that $\alpha=\iota_Y
\iota_X \varphi$ is a contact form for $\xi$.
\end{definition}

\vspace{.1in}

In this paper, we will mainly consider $A$-compatible contact structures. We remark that if $\varphi$ is torsion-free or at least $d\varphi=0$, then Definition \ref{def:A-Compatibility} makes sense only if $M$ is noncompact or compact with boundary. Indeed, we can easily prove the following:

\vspace{.1in}

\begin{theorem} \label{thm:A-Comp_can_not_Closed}
Let $(M,\varphi)$ be a manifold with $G_2$-structure such that $d\varphi=0$. If $M$ is closed (i.e., compact and $\partial M= \emptyset$), then there is no contact structure on $M$ which is A-compatible with $\varphi$.
\end{theorem}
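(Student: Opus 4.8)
The plan is to argue by contradiction, playing a Stokes-type vanishing against a positivity coming from the $G_2$-metric. Suppose $\xi$ is a contact structure on the closed manifold $M$ that is $A$-compatible with $\varphi$. By Definition \ref{def:A-Compatibility} there exist a contact form $\alpha$ for $\xi$, a vector field $R$, and a nowhere-vanishing function $f$ such that $d\alpha=\iota_R\varphi$, with $fR$ the Reeb field of some contact form for $\xi$. The target $7$-form on which to run the argument is $(d\alpha)^2\wedge\varphi$, and the whole proof amounts to evaluating it in two incompatible ways.

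First I would observe that $R$ is nowhere vanishing: a Reeb field is everywhere nonzero, so $fR$ is nowhere zero, which already forces $R(p)\neq 0$ at every $p$ (if $R(p)=0$ then $f(p)R(p)=0$). Hence $g_\varphi(R,R)>0$ on all of $M$. Next, substituting $d\alpha=\iota_R\varphi$ and applying the identity \eqref{eqn:Metric_Volume} with $u=v=R$ gives
\[
(d\alpha)^2\wedge\varphi=(\iota_R\varphi)\wedge(\iota_R\varphi)\wedge\varphi=6\,g_\varphi(R,R)\,\textrm{Vol},
\]
a strictly positive multiple of the volume form, so that $\int_M (d\alpha)^2\wedge\varphi=6\int_M g_\varphi(R,R)\,\textrm{Vol}>0$. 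On the other hand, since $d\varphi=0$ the same $7$-form is exact:
\[
(d\alpha)^2\wedge\varphi=d\bigl(\alpha\wedge d\alpha\wedge\varphi\bigr),
\]
because $d(\alpha\wedge d\alpha)=(d\alpha)^2$ and $d\varphi=0$ kill the remaining term. As $M$ is closed, Stokes' theorem forces $\int_M (d\alpha)^2\wedge\varphi=0$, contradicting the strict positivity above.

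Each individual step is short once the right $7$-form has been singled out; I do not expect a genuine technical obstacle so much as a single organizing insight, namely recognizing that pairing the compatibility relation $d\alpha=\iota_R\varphi$ with $\varphi$ through \eqref{eqn:Metric_Volume} turns the cohomological triviality of $(d\alpha)^2\wedge\varphi$ (which is forced by $d\varphi=0$) into a clash with metric positivity. The bookkeeping I would be careful about is verifying that each hypothesis is actually used: closedness of $M$ for Stokes, $d\varphi=0$ for exactness, and the nonvanishing of $R$ (inherited from the Reeb condition on $fR$) for the strict inequality. Notably the full contact condition $\alpha\wedge(d\alpha)^3\neq 0$ is not needed directly; the relation $d\alpha=\iota_R\varphi$ together with $R\neq 0$ is what drives the contradiction, and the function $f$ enters only to guarantee $R\neq 0$.
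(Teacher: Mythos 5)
Your proposal is correct and follows essentially the same route as the paper: substitute $d\alpha=\iota_R\varphi$ into $(d\alpha)^2\wedge\varphi$, use identity (\ref{eqn:Metric_Volume}) to get $6\|R\|^2\,\mathrm{Vol}>0$, observe the form is exact since $d\varphi=0$, and contradict via Stokes' theorem on the closed $M$. Your extra remark justifying that $R$ is nowhere vanishing (inherited from the Reeb condition on $fR$) is a point the paper asserts without comment, so it is a welcome clarification rather than a deviation.
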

\begin{proof} Suppose $\xi$ is an A-compatible contact structure on
$(M,\varphi)$. Therefore, $d\alpha=\iota_R \varphi$ for some contact
form $\alpha$ for $\xi$ and some nonvanishing vector field $R$.
Using the equation (\ref{eqn:Metric_Volume}), we have
$$d\alpha \wedge d\alpha \wedge \varphi=(\iota_R \varphi) \wedge (\iota_R
\varphi) \wedge \varphi=6\|R\|^2\textrm{ Vol}.$$ Since $d\varphi=0$,
we have $d\alpha \wedge d\alpha \wedge \varphi=d(\alpha \wedge
d\alpha \wedge \varphi)$. Now by Stoke's Theorem,
$$0 \lneqq \int_M 6\|R\|^2\textrm{ Vol}=\int_M d(\alpha \wedge d\alpha \wedge \varphi)=\int_{\partial M}\alpha \wedge d\alpha \wedge \varphi=0$$
(as $\partial M=\emptyset$). This gives a contradiction.

\end{proof}

For another application of this argument on specific vector fields on manifolds with $G_2$ structures, see \cite{CST1}.\\

We now explore the relation between the standard contact structure
$\xi_0$ and the standard $G_2$-structure $\varphi_0$ on $\R^7$.
Indeed, the notion of A- and B-compatibility
relies on this motivating example.\\

Fix the coordinates $(x_1, x_2, x_3, x_4, x_5, x_6, x_7)$ on
$\mathbb{R}^7$. In these coordinates,
$$\varphi_0=e^{123}+e^{145}+e^{167}+e^{246}-e^{257}-e^{347}-e^{356}$$
where $e^{ijk}$ denotes the 3-form $dx_i \wedge dx_j \wedge dx_k$.
Consider the standard contact structure $\xi_0$ on $\mathbb{R}^7$ as
the kernel of the 1-form
$$\alpha_0=dx_1-x_3dx_2-x_5dx_4-x_7dx_6.$$

For simplicity, through out the paper we will denote $\partial / \partial x_i$ by $\partial
x_i$ (so we have $dx_i(\partial x_j)=\delta_{ij}$). Consider the
vector fields
\begin{center}
$R=\partial x_1$, $X=\partial x_7$ and $Y=-x_7\partial
x_1+x_5\partial x_3-x_3\partial x_5- \partial x_6 + f\partial x_7$
\end{center}
where $f:\mathbb{R}^7  \rightarrow \mathbb{R}$ is any smooth
function (in fact, it is enough to take $f\equiv 0$ for our
purpose). By a straightforward computation, we see that
$$d\alpha_0=\iota_R (\varphi_0), \quad \alpha_0=\iota_Y\iota_X (\varphi_0).$$
Also observe that $R$ is the Reeb vector field of $\alpha_0$. Note that this contact structure is not unique A-compatible with $\varphi_{0}$.  In fact we have  various
ways of choosing the contact structures by rotating indexes and signes. For example, the contact structure $\alpha=dx_2+x_3dx_1-x_6dx_4+x_7dx_5$ with $R=\partial x_2$ is another A-compatible contact structure with $\varphi_0$ and by choosing  two vectors $X=\partial x_7, Y=\partial x_5-x_3\partial x_6+x_6\partial x_3-x_7\partial x_2+f \partial x_7$  it is easily seen as being B-compatible with $\varphi_{0}$.
Therefore, we have proved:

\begin{theorem}
There are contact structures $\xi$ on $\mathbb{R}^7$ which are both A-
and B-compatible with the standard $G_2$-structure $\varphi_0$.\qed
\end{theorem}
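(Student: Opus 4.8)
The plan is to prove the statement constructively, by exhibiting a single contact structure $\xi_0$ on $\mathbb{R}^7$ together with explicit vector fields that simultaneously realize the data required by Definition \ref{def:A-Compatibility} and Definition \ref{def:B-Compatibility}. I would take $\xi_0 = \mathrm{Ker}(\alpha_0)$ for the standard contact form $\alpha_0 = dx_1 - x_3 dx_2 - x_5 dx_4 - x_7 dx_6$, so that $d\alpha_0 = dx_2\wedge dx_3 + dx_4\wedge dx_5 + dx_6\wedge dx_7$. A first routine check confirms $\alpha_0 \wedge (d\alpha_0)^3 \neq 0$, so $\alpha_0$ is genuinely a contact form; the substantive content is then matching this single $\alpha_0$ against $\varphi_0$ in two different ways.

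For A-compatibility I would set $R = \partial x_1$ and $f \equiv 1$. Contracting $\varphi_0$ with $\partial x_1$ kills every monomial not containing $dx_1$, leaving precisely $\iota_R\varphi_0 = dx_2\wedge dx_3 + dx_4\wedge dx_5 + dx_6\wedge dx_7 = d\alpha_0$, which is the first identity demanded by the definition. It then remains only to see that $R$ is the Reeb field of $\alpha_0$, and this is immediate: $\alpha_0(\partial x_1) = 1$, while $\iota_R d\alpha_0 = 0$ because $\partial x_1$ appears in no term of $d\alpha_0$. Hence $fR = R$ is already a Reeb vector field and A-compatibility holds with no rescaling.

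For B-compatibility I would take $X = \partial x_7$ and $Y = -x_7\partial x_1 + x_5\partial x_3 - x_3\partial x_5 - \partial x_6 + f\partial x_7$, and verify the identity $\alpha_0 = \iota_Y\iota_X\varphi_0$ by two successive contractions. The first gives $\iota_X\varphi_0 = dx_1\wedge dx_6 - dx_2\wedge dx_5 - dx_3\wedge dx_4$, since only the three monomials of $\varphi_0$ containing $dx_7$ survive. Contracting this $2$-form against $Y$ then returns $dx_1 - x_3 dx_2 - x_5 dx_4 - x_7 dx_6 = \alpha_0$. As $\alpha_0$ is already established to be a contact form for $\xi_0$, this shows that the same $\xi_0$ is B-compatible, which completes the proof.

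The individual contractions are elementary; the genuine obstacle is the \emph{choice} of $Y$, which must be reverse-engineered so that the double interior product recovers exactly the $\alpha_0$ already fixed by the A-condition --- a priori the two definitions could force two unrelated contact forms. The structural fact that makes coexistence possible is that $\iota_X\varphi_0 = dx_1\wedge dx_6 - dx_2\wedge dx_5 - dx_3\wedge dx_4$ contains no $dx_7$ factor, so the $f\partial x_7$ component of $Y$ drops out of the final contraction; this both explains why $f$ may be chosen arbitrarily (one may take $f\equiv 0$) and confirms that a single contact structure can carry both compatibilities at once.
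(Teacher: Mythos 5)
Your proposal is correct and is essentially identical to the paper's own argument: the same contact form $\alpha_0$, the same $R=\partial x_1$, and the same $X=\partial x_7$, $Y=-x_7\partial x_1+x_5\partial x_3-x_3\partial x_5-\partial x_6+f\partial x_7$, with the contractions verified explicitly (your observation that $\iota_X\varphi_0$ contains no $dx_7$ factor nicely explains the paper's remark that $f$ may be taken to be $0$). No gaps.
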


%
%

\section{An explicit almost contact metric structure} \label{sec:Explicit_Almost_Contact}

We first give an alternative definition of an almost contact
structure, and then construct an explicit almost contact structure
on a manifold with $G_2$-structure. The reader is referred to
\cite{Blair} for the equivalence between the previous definition (Definition \ref{def:Almost_contact_1}) and
this new one.

\begin{definition} [\cite{Sasaki}] \label{def:Almost_contact_2}
An \emph{almost contact structure} on a differentiable manifold
$M^{2n+1}$ is a triple $(J,R,\alpha)$ consists of a field $J$ of
endomorphisms of the tangent spaces, a vector field $R$, and a
$1$-form $\alpha$ satisfying
\begin{itemize}
\item[(i)] $\alpha(R)=1$,
\item[(ii)] $J^2=-I+\alpha \otimes R$
\end{itemize}
where $I$ denotes the identity transformation.
\end{definition}

For completeness, we provide the proof of the following lemma.

\begin{lemma} [\cite{Sasaki}] \label{cor:G_2_almost_contact}
Suppose that $(J,R,\alpha)$ is an almost contact structure on
$M^{2n+1}$. Then $J(R)=0$ and $\alpha\circ J=0$
\end{lemma}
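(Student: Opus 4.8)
The plan is to work purely algebraically from the two defining relations (i) $\alpha(R)=1$ and (ii) $J^2 = -I + \alpha \otimes R$, where $\alpha \otimes R$ denotes the endomorphism $v \mapsto \alpha(v)R$. First I would record the pointwise fact that $R$ never vanishes: if $R_p=0$ at some $p$, then $\alpha_p(R_p)=0\neq 1$, contradicting (i). This nonvanishing is exactly what will legitimize the cancellations later on. As a warm-up, applying (ii) to the vector $R$ and using (i) gives
$$J^2 R = -R + \alpha(R)R = -R + R = 0,$$
so that $J(JR)=0$.

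The next step is to upgrade this to $J(R)=0$. Set $w:=JR$; since $Jw=0$, applying (ii) to $w$ yields $0 = J^2 w = -w + \alpha(w)R$, i.e. $w=\alpha(w)R$. Thus $JR$ is a scalar multiple of $R$; writing $c:=\alpha(w)$ we have $JR=cR$. Applying $J$ once more gives $J^2 R = c\,JR = c^2 R$, but $J^2R=0$ and $R$ is nowhere zero, so $c=0$ and therefore $J(R)=0$.

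To obtain $\alpha\circ J=0$ I would compare the two factorizations of $J^3$. Computing $J^3$ as $J\circ J^2$ sends $v\mapsto -Jv + \alpha(v)\,JR$, while computing it as $J^2\circ J$ sends $v\mapsto -Jv + \alpha(Jv)\,R$. Equating these and cancelling $-Jv$ leaves $\alpha(v)\,JR = \alpha(Jv)\,R$ for all $v$; substituting $JR=0$ from the previous step gives $\alpha(Jv)\,R=0$, and since $R$ is nowhere zero we conclude $\alpha(Jv)=0$ for every $v$, that is, $\alpha\circ J=0$.

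The only genuine obstacle is the order of operations. The relation $\alpha\circ J=0$ cannot be extracted directly from (ii), since applying $\alpha$ to (ii) only produces $\alpha\circ J^2=0$ (using $\alpha(R)=1$), which merely says $\alpha\circ J$ vanishes on the image of $J$. One must first establish $J(R)=0$ and then invoke the nonvanishing of $R$ to turn the identity $\alpha(Jv)R=0$ into $\alpha(Jv)=0$. Everything else is routine endomorphism algebra.
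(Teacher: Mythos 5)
Your argument is correct and follows essentially the same route as the paper's: both first deduce $J^2R=0$, then show $JR$ must be a multiple $cR$ of $R$ and force $c=0$ (you do this directly via $0=J^2R=c^2R$, the paper by contradiction), and both obtain $\alpha\circ J=0$ by comparing the two factorizations $J\circ J^2$ and $J^2\circ J$ of $J^3$ and using $JR=0$ together with the nonvanishing of $R$. No gaps.
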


\begin{proof}
Since $J^{2}(R)=-R+\alpha(R)R=-R+1\cdot R=0$, we have either $J(R)=0$ or $J(R)$
is nonzero vector field whose image is $0$. Suppose $J(R)$ is
nonzero vector field which is mapped to $0$ by $J$. Then from
$$0=J^{2}(J(R))=-J(R)+\alpha (J(R))\cdot R$$
we get $J(R)=\alpha (J(R))\cdot R$, and so $\alpha (J(R))\neq0$ (as $J(R)\neq0$). But then
$$J^{2}(R)=J(J(R))=J(\alpha(J(R))R)=\alpha(J(R))\cdot J(R)=
[\alpha(J(R))]^{2}\cdot R \neq 0$$
which contradicts to assumption that $J^2(R)=J(J(R))=0$. Hence, we conclude that $J(R)=0$ must be the case.\\

Now for any vector $X$, we see that
\begin{equation*}
 \begin{CD}
 J^{3}(X)=J(J^{2}(X))=J((-X)+\alpha(X)R)=-J(X)+J(\alpha(X)R)
 \end{CD}
\end{equation*}
 and also we have
\begin{equation*}
 \begin{CD}
 J^{3}(X)=J^{2}(J(X))=-J(X)+\alpha (J(X)) R.
 \end{CD}
\end{equation*}
So combining these we compute
\begin{eqnarray}
\nonumber \alpha (J(X)) R &=&J^{3}(X)+J(X)\\
\nonumber &=&-J(X)+J(\alpha(X)R)+J(X)=J(\alpha(X)R).
\end{eqnarray}
But using the fact $J(R)=0$ we have $$J(\alpha (X)R)=\alpha(X)J(R)=0.$$ Therefore, $\alpha (J(X))=0$ as $R\neq0$. Hence, $\alpha \circ J=0$ for any vector $X$.

\end{proof}

We can also introduce a Riemannian metric into the picture as
suggested in the following definition.

\begin{definition} [\cite{Sasaki}] \label{def:Almost_contact_metric}
An \emph{almost contact metric structure } on a differentiable
manifold $M^{2n+1}$ is a quadruple $(J,R,\alpha,g)$ where
$(J,R,\alpha)$ is an almost contact structure on $M$ and $g$ is a
Riemannian metric on $M$ satisfying
\begin{equation} \label{eqn:Compatible_Metric}
g(Ju,Jv)=g(u,v)-\alpha(u)\alpha(v)
\end{equation}
for all vector fields  $u,v$ in $TM$. Such a $g$ is called a
\emph{compatible metric}.
\end{definition}

\begin{remark}
Every manifold with an almost contact structure admits a compatible
metric (see \cite{Blair}, for a proof). Also setting $u=R$ in
Equation (\ref{eqn:Compatible_Metric}) gives
$g(JR,Jv)=g(R,v)-\alpha(R)\alpha(v)$. Since $J(R)=0$, an immediate
consequence is that $\alpha$ is the covariant form of $R$, that
is, $\alpha(v)=g(R,v)$.
\end{remark}

\begin{definition} [\cite{Sasaki}] Let $M$ be an odd dimensional manifold, and $\alpha$ be a contact form on $M$ with the Reeb vector field $R$. Therefore, $d\alpha$ is
a symplectic form on the contact structure (or distribution) $\xi=
\textrm{Ker}(\alpha)$. We say that the triple $(J,R,\alpha)$ is an
\emph{associated almost contact structure} for $\xi$ if $J$ is
$d\alpha$-compatible almost complex structure on the complex bundle
$\xi$, that is
\begin{center}
$d\alpha(JX,JY)=d\alpha(X,Y)$ and $d\alpha(X,JX)>0$ for all $X,Y \in
\xi$.
\end{center}
Furthermore, if $g$ is a metric on $M$, we consider two equations :
\begin{equation}
g(JX,JY)=g(X,Y)-\alpha(X)\alpha(Y)
\end{equation}
\begin{equation}
d\alpha(X,Y)=g(JX,Y)
\end{equation}
for all $X,Y \in TM$. We say that $(J,R,\alpha, g)$ is an
\emph{associated almost contact metric structure} if two equations
(6) and (7) hold. In this case, $g$ is called an \emph{associated
metric}.
\end{definition}

Suppose that $(M,\varphi)$ is a manifold with $G_2$-structure. There
might be many ways to construct almost contact metric structures on
$(M,\varphi)$. Here we give a particular way of constructing almost
contact metric structures on $(M,\varphi)$. Denote the Riemannian
metric and the cross product (determined by $\varphi$) by
$g_{\varphi}=\langle\cdot,\cdot\rangle_{\varphi}$ and
$\times_{\varphi}$, respectively. Suppose that $R$ is a nowhere vanishing vector field on $M$. By normalizing $R$ using $g_{\varphi}$, we may assume that $\|R\|=1$. Then using the metric, we define the
$1$-form $\alpha_{R}$ as the metric dual of $R$, that is,
$$\alpha_{R}(u)=g_{\varphi}(R,u)=\langle R,u\rangle_{\varphi}.$$
Moreover, using the cross product and $R$, we can define an
endomorphism $J_R:TM\rightarrow TM$ of the tangent spaces by
$$J_R(u)=R \times_{\varphi} u.$$ Note that $J_R(R)=0$, and so $J_R$,
indeed, defines a complex structure on the orthogonal complement
$R^{\perp}$ of $R$ with respect to $g_{\varphi}$. With these, we have

\begin{theorem} Let $(M,\varphi)$ be a manifold with $G_2$-structure. Then the quadruple $(J_R,R,\alpha_{R},g_{\varphi})$ defines an almost contact metric structure on $M$
for any non-vanishing vector field $R$ on $M$. Moreover, such a structure exists on any manifold with $G_2$-structure.
\end{theorem}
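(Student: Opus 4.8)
The plan is to verify directly that the triple $(J_R,R,\alpha_R)$ satisfies the two axioms of Definition \ref{def:Almost_contact_2} and that $g_\varphi$ is a compatible metric in the sense of Equation (\ref{eqn:Compatible_Metric}), after which the existence clause reduces to a standard topological fact. First I would normalize $R$ so that $\|R\|=1$ with respect to $g_\varphi$, which is harmless since $R$ is nowhere vanishing. Axiom (i) is then immediate: $\alpha_R(R)=g_\varphi(R,R)=\|R\|^2=1$. For axiom (ii) I would simply apply the double–cross–product formula (\ref{eqn:Double_Cross_Product}) with $u=R$, obtaining $J_R^2(v)=R\times(R\times v)=-\|R\|^2 v+g_\varphi(R,v)R=-v+\alpha_R(v)R$, which is exactly $(-I+\alpha_R\otimes R)(v)$.

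The substantive step is the metric compatibility $g_\varphi(J_R u,J_R v)=g_\varphi(u,v)-\alpha_R(u)\alpha_R(v)$, and this is where I expect the main work to lie. The strategy is to reduce everything to the orthogonal complement $R^{\perp}$. Since $R\times R=0$, writing $u=\tilde u+\alpha_R(u)R$ with $\tilde u\in R^{\perp}$ gives $J_R u=R\times\tilde u$, so that $g_\varphi(J_R u,J_R v)=g_\varphi(R\times\tilde u,R\times\tilde v)$. It therefore suffices to show that $J_R$ is an isometry of $R^{\perp}$; the claimed identity then follows by expanding $g_\varphi(\tilde u,\tilde v)$ in terms of $u,v$ and using $\|R\|=1$ together with $g_\varphi(R,\cdot)=\alpha_R(\cdot)$.

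To prove that $J_R|_{R^{\perp}}$ is an isometry I would first check that it preserves norms and then polarize, using that $J_R$ is $\mathbb{R}$-linear. For $\tilde u\in R^{\perp}$ I would compute $\|R\times\tilde u\|^2=g_\varphi(R\times\tilde u,R\times\tilde u)$ using the cyclic symmetry $g_\varphi(a\times b,c)=g_\varphi(a,b\times c)$ (which follows from Equation (\ref{eqn:Cross_Product}) and the fact that $\varphi$ is alternating) together with the antisymmetry $a\times b=-b\times a$. Concretely, $g_\varphi(R\times\tilde u,R\times\tilde u)=g_\varphi(R,\tilde u\times(R\times\tilde u))$, and one further application of (\ref{eqn:Double_Cross_Product}) after rewriting $\tilde u\times(R\times\tilde u)=-\,\tilde u\times(\tilde u\times R)$ yields $\tilde u\times(R\times\tilde u)=\|\tilde u\|^2 R$ when $\tilde u\perp R$; hence $\|R\times\tilde u\|^2=\|\tilde u\|^2$. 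The polarization identity $g_\varphi(x,y)=\tfrac12(\|x+y\|^2-\|x\|^2-\|y\|^2)$, applied inside $R^{\perp}$, then upgrades norm preservation to full inner-product preservation.

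Finally, for the existence statement I would note that any manifold carrying a $G_2$-structure is a smooth $7$-manifold, hence odd-dimensional, so its Euler characteristic vanishes; by the Poincar\'e--Hopf theorem in the closed case (the non-compact and boundary cases being immediate) $M$ admits a nowhere-vanishing vector field $R$, and feeding this $R$ into the construction produces the desired structure. I expect the isometry computation on $R^{\perp}$ to be the only place requiring genuine care; everything else is a direct substitution into the formulas (\ref{eqn:Cross_Product}) and (\ref{eqn:Double_Cross_Product}).
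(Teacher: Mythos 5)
Your proof is correct, and the overall skeleton (normalize $R$, verify the two axioms of Definition \ref{def:Almost_contact_2} via formula (\ref{eqn:Double_Cross_Product}), then check metric compatibility and invoke a vector-field existence result) matches the paper's. The one substantive difference is in the metric-compatibility step. The paper handles arbitrary $u,v$ in a single chain of equalities: $g_\varphi(R\times u,R\times v)=\varphi(R,u,R\times v)=-\varphi(R,R\times v,u)=-g_\varphi\bigl(R\times(R\times v),u\bigr)$, and then applies (\ref{eqn:Double_Cross_Product}) once to land directly on $g_\varphi(u,v)-\alpha_R(u)\alpha_R(v)$; no decomposition into $R^\perp$ and no polarization are needed. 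Your route --- split off the $R$-component, prove $\|R\times\tilde u\|=\|\tilde u\|$ on $R^\perp$ via the cyclic identity $g_\varphi(a\times b,c)=g_\varphi(a,b\times c)$ and (\ref{eqn:Double_Cross_Product}), polarize, then re-expand --- is also valid (your intermediate identity $\tilde u\times(R\times\tilde u)=\|\tilde u\|^2R$ for $\tilde u\perp R$ checks out), but it is longer and makes the extra observation $J_Ru=R\times\tilde u$ do work the direct computation avoids; what it buys is the conceptually transparent statement that $J_R$ is an orthogonal complex structure on $R^\perp$. For the existence clause the paper simply cites \cite{Thomas}, whereas your argument via vanishing Euler characteristic and Poincar\'e--Hopf (plus the standard fact for open manifolds and manifolds with boundary) is more elementary and self-contained; both are fine.
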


\begin{proof}
As before, we will assume that $R$ is already normalized using $g_{\varphi}$. First, note that $\alpha_{R}(R)=g_{\varphi}(R,R)=\|R\|^2=1$. Also we have
$$J^2_{R}(u)=J_{R}(R\times_{\varphi}u)=R\times_{\varphi}(R\times_{\varphi}u)
=-\|R\|^{2}u+g_{\varphi}(R,u)R=-u+\alpha(u)R$$
where we made use of the identity (\ref{eqn:Double_Cross_Product}). This shows that the endomorphism $J_R:TM \to TM$ satisfies the condition $$J^2_{R}=-I+\alpha \otimes R.$$ Therefore, the triple $(J_R,R,\alpha_R)$ is an almost contact structure on $M$. Next, we check $g_{\varphi}$ is a compatible metric with this
structure. Using (\ref{eqn:Cross_Product}) and (\ref{eqn:Double_Cross_Product}), we compute
\begin{eqnarray}
\nonumber g_{\varphi}(J_Ru,J_Rv)&=&g_{\varphi}(R \times_{\varphi}
u,R \times_{\varphi} v)=\varphi(R,u,R \times_{\varphi}  v)
=-\varphi(R,R
\times_{\varphi} v,u)\\
\nonumber &=&-g_{\varphi}(R \times_{\varphi} (R \times_{\varphi}
v),u)=-g_{\varphi}(-\|R\|^2v+g_{\varphi}(R,v)R,u)\\
\nonumber &=&-g_{\varphi}(-v+g_{\varphi}(R,v)R,u)=g_{\varphi}(v,u)-g_{\varphi}(\alpha_{R}(v)R,u)\\
\nonumber &=&=g_{\varphi}(u,v)-\alpha_{R}(v)\underbrace{g_{\varphi}(R,u)}_{\alpha_R(u)}=g_{\varphi}(u,v)-\alpha_{R}(u)\alpha_{R}(v)
\end{eqnarray}
which holds for all vector fields  $u,v$ in $TM$. This proves that $g_{\varphi}$ satisfies (\ref{eqn:Compatible_Metric}). Hence, $(J_R,R,\alpha_{R},g_{\varphi})$ is an almost contact metric structure on $M$.

For the last statement, we know by \cite{Thomas} that there exists a
nowhere vanishing vector field $R$ on any $7$-dimensional manifold. In particular, $(J_R,R,\alpha_{R},g_{\varphi})$ can be constructed on any manifold $M$ with $G_2$-structure $\varphi$.

\end{proof}

\begin{theorem} \label{thm:almost_contact_A_compatible}
Let $(M,\varphi)$ be a manifold with $G_{2}$-structure, and
$(J_{R},R, \alpha_{R}, g_{\varphi})$ be an almost contact metric
structure on $M$ constructed as above. Suppose that $\xi$ is a
contact structure on $M$ such that $(J_{R},R, \alpha_{R},
g_{\varphi})$ is an associated almost contact metric structure for
$\xi$. Then $\xi$ is $A$-compatible.
\end{theorem}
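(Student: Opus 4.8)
The plan is to verify directly the two clauses of Definition \ref{def:A-Compatibility} of $A$-compatibility, drawing everything from the data packaged into the hypothesis that $(J_R,R,\alpha_R,g_\varphi)$ is an \emph{associated} almost contact metric structure for $\xi$. The entire argument will hinge on one identification: that the second defining relation of an associated metric structure, namely $d\alpha_R(X,Y)=g_\varphi(J_RX,Y)$, becomes, once the definition of $J_R$ and the cross-product formula are inserted, exactly the equation $d\alpha_R=\iota_R\varphi$ required for $A$-compatibility.

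First I would unpack the hypothesis. Because $(J_R,R,\alpha_R,g_\varphi)$ is an associated almost contact metric structure for $\xi$, the $1$-form $\alpha_R$ is a contact form with $\xi=\textrm{Ker}(\alpha_R)$, its Reeb vector field is $R$, and the relation $d\alpha_R(X,Y)=g_\varphi(J_RX,Y)$ holds for all $X,Y\in TM$.

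The key step is then a short computation. Substituting the definition $J_R(X)=R\times_{\varphi}X$ and applying the cross-product identity (\ref{eqn:Cross_Product}) with $(u,v,w)=(R,X,Y)$ gives
$$d\alpha_R(X,Y)=g_\varphi(R\times_{\varphi}X,\,Y)=\varphi(R,X,Y)=(\iota_R\varphi)(X,Y)$$
for all $X,Y\in TM$. As this holds on all of $TM$, it is an equality of $2$-forms, $d\alpha_R=\iota_R\varphi$, which is precisely the first requirement of $A$-compatibility, realized by the contact form $\alpha=\alpha_R$ together with this very $R$.

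It remains to check the Reeb clause, and this is immediate: the hypothesis already provides that $R$ is the Reeb vector field of the contact form $\alpha_R$ for $\xi$, so taking the nonzero function $f\equiv1$ makes $fR=R$ the Reeb vector field of a contact form for $\xi$. Both conditions of Definition \ref{def:A-Compatibility} are thus satisfied, and therefore $\xi$ is $A$-compatible. I do not expect a genuine obstacle in this proof; its whole content lies in recognizing that the associated-metric equation $d\alpha_R(X,Y)=g_\varphi(J_RX,Y)$ and the cross-product identity (\ref{eqn:Cross_Product}) together reproduce the $A$-compatibility equation on the nose, after which every remaining step is a routine unwinding of definitions.
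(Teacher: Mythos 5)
Your proof is correct and follows essentially the same route as the paper: both extract the associated-metric identity $d\alpha_R(u,v)=g_\varphi(J_Ru,v)$, insert $J_R(u)=R\times_\varphi u$ and the cross-product formula (\ref{eqn:Cross_Product}) to obtain $d\alpha_R=\iota_R\varphi$, and then note that $R$ is already the Reeb vector field of $\alpha_R$ by hypothesis (with $f\equiv 1$). No issues.
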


\begin{proof}
By assumption $(J_{R},R, \alpha_{R}, g_{\varphi})$ is an associated
almost contact metric structure for $\xi$. Therefore, $g_{\varphi}$
is an associated metric and satisfies
\begin{center} $d\alpha_{R}(u,v)=g_{\varphi}(J_{R}(u),v)$ for all
$u,v \in TM.$
\end{center}
But then using the equation defining $J_{R}$ and (\ref{eqn:Cross_Product}), we obtain
\begin{center}$d\alpha_{R}(u,v)=g_{\varphi}(R
\times_{\varphi}u,v)=\varphi(R,u,v)=i_{R}\varphi(u,v),  \quad  \forall
u,v \in TM$.
\end{center}
Therefore, we have $d\alpha_{R}=i_{R}\varphi$. Also $R$ is the Reeb vector field of $\alpha_{R}$ by assumption. Hence, $\xi$ is $A$-compatible by definition.

\end{proof}

\begin{corollary}
Let $(M,\varphi)$ be a manifold with $G_{2}$-structure such that $d\varphi=0$,
and $(J_{R},R, \alpha_{R}, g_{\varphi})$ be an almost contact metric
structure on $M$ constructed as above. If $M$ is closed, then there is no contact
structure on $M$ whose associated almost contact metric structure is
$(J_{R},R, \alpha_{R}, g_{\varphi})$.
\end{corollary}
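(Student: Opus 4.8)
The plan is to obtain this corollary as a direct consequence of the two results already proved, combining Theorem \ref{thm:almost_contact_A_compatible} with Theorem \ref{thm:A-Comp_can_not_Closed}. The argument is by contradiction, so I would begin by supposing that there does exist a contact structure $\xi$ on $M$ whose associated almost contact metric structure is exactly the quadruple $(J_R,R,\alpha_R,g_\varphi)$ constructed as above.

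Granting that supposition, I would apply Theorem \ref{thm:almost_contact_A_compatible} verbatim: its hypothesis is precisely that $(J_R,R,\alpha_R,g_\varphi)$ is an associated almost contact metric structure for $\xi$, and its conclusion is that $\xi$ is $A$-compatible with $\varphi$. Concretely, the associated-metric relation $d\alpha_R(u,v)=g_\varphi(J_R u,v)$ unwinds, via the definition of $J_R$ and equation (\ref{eqn:Cross_Product}), to $d\alpha_R=\iota_R\varphi$, while $R$ is the Reeb vector field of $\alpha_R$; this is exactly Definition \ref{def:A-Compatibility} (taking $f\equiv 1$). Thus $\xi$ is $A$-compatible.

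Finally, I would invoke the standing hypotheses that $d\varphi=0$ and that $M$ is closed. Under these assumptions, Theorem \ref{thm:A-Comp_can_not_Closed} asserts outright that no contact structure on $M$ can be $A$-compatible with $\varphi$. This directly contradicts the $A$-compatibility of $\xi$ established in the previous step. Hence no contact structure $\xi$ with the prescribed associated structure can exist, which is the claim.

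Because the statement is purely a splicing of two prior theorems, I do not expect any genuine obstacle. The only point demanding care — and what I would verify explicitly — is that the hypotheses of both invoked results are satisfied word for word: that $(J_R,R,\alpha_R,g_\varphi)$ is the structure built ``as above'' so that Theorem \ref{thm:almost_contact_A_compatible} genuinely applies, and that the closedness of $M$ together with $d\varphi=0$ matches the exact hypotheses of Theorem \ref{thm:A-Comp_can_not_Closed}.
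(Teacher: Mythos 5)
Your proposal is correct and follows exactly the paper's own argument: assume such a contact structure exists, apply Theorem \ref{thm:almost_contact_A_compatible} to conclude $A$-compatibility, and contradict Theorem \ref{thm:A-Comp_can_not_Closed} using the hypotheses that $M$ is closed and $d\varphi=0$. No gaps.
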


\begin{proof}On the contrary, suppose that $\xi= \textrm{Ker}(\alpha_R)$ is a
contact structure on a closed manifold $M$ equipped with a
$G_{2}$-structure $\varphi$ and $d\varphi=0$, and also that $(J_{R},R,
\alpha_{R}, g_{\varphi})$ is an associated almost contact metric
structure. Then, by Theorem \ref{thm:almost_contact_A_compatible},
$\xi$ is A-compatible, but this contradicts to Theorem
\ref{thm:A-Comp_can_not_Closed}.

\end{proof}

%
%

\section{Contact$-G_2-$structures on $7$-manifolds} \label{sec:Contact$-G_2-$structures}

Suppose that $(M,\varphi)$ is a manifold with $G_2$-structure. Let us recall the decomposition of the space $\Lambda^{2}$ of $2$-forms on $M$ obtained from $G_2$-representation and some other useful formulas which we will use. A good source for these is \cite{Bryant} and also \cite{Karigiannis}. According to irreducible $G_{2}$-representation, $\Lambda^{2}=\Lambda^{2}_{7}\oplus \Lambda^{2}_{14}$ where
\begin{eqnarray} \label{eqn:Decomposition_from_G_2_rep}
\nonumber \Lambda^{2}_{7} &=& \{ i_{v}\varphi; v \in \Gamma(TM)\} \\
\nonumber &=&\{\beta \in \Lambda^{2}; \ast(\varphi \wedge \beta)=-2\beta \}\\
 &=&\{\beta \in \Lambda^{2} ; \ast(\ast \varphi \wedge (\ast (\ast \varphi \wedge \beta)))=3\beta \}\\
\nonumber & & \\
\nonumber \Lambda^{2}_{14} &=& \{\beta \in \Lambda^{2} ; \ast \varphi \wedge \beta=0 \}\\
\nonumber &=& \{ \beta \in \Lambda^{2} ; \ast (\varphi \wedge \beta)=\beta \}
\end{eqnarray}

Also on any Riemannian $n$-manifold, for any $k$-form $\alpha$ and a vector field $v$, the following equalities hold:
\begin{equation} \label{eqn_k_v_with_Hodge_star}
i_{v}\ast \alpha = (-1)^{k} \ast(v^{\flat} \wedge \alpha)  \;\;\;\;\; \textrm{and}
\end{equation}
\begin{equation}\label{eqn_k_v_no_Hodge_star}
i_{v}\alpha=(-1)^{nk+n}\ast(v^{\flat} \wedge \ast \alpha).
\end{equation}

As a last one we recall a very useful equality: For any $k$-form $\lambda$, and any
$(n+1-k)$-form $\mu$ and any vector field $v$ on a smooth manifold
of dimension $n$, we have
\begin{equation} \label{eqn:Key_for_Showing_Contact}
(\iota_v \lambda) \wedge \mu=(-1)^{k+1}\lambda \wedge (\iota_v \mu).
\end{equation}

Now we are ready to prove:

\begin{theorem} \label{thm:G2_representation}
Let $(M,\varphi)$ be a manifold with $G_2$-structure. Assume that there are nowhere-zero vector fields $X$, $Y$ and $Z$ on $M$ satisfying
\begin{equation} \label{eqn:condition_1_thm_G_2_rep}
\iota _Z \varphi=Y^{\flat} \wedge X^{\flat}
\end{equation}
where $X^{\flat}$ (resp. $Y^{\flat}$) is the covariant 1-form of $X$ (resp. $Y$) with respect to the $G_2$-metric $g_{\varphi}$. Also suppose that
\begin{equation} \label{eqn:condition_2_thm_G_2_rep}
d (i_{X}i_{Y}\varphi)=i_{X}i_{Y}\ast \varphi.
\end{equation}
Then the 1-form $\alpha := Z^{\,\flat}=g_{\varphi}(Z,\cdot)$ is a contact form on $M$ and it  defines an $A$-compatible contact structure $\emph{Ker}(\alpha)$ on $(M,\varphi)$.
\end{theorem}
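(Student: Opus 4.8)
The plan is to distill the two hypotheses into a single identity $d\alpha = \iota_R\varphi$ for a nowhere-zero vector field $R$ proportional to $Z$, from which both the contact condition and $A$-compatibility follow quickly.

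The crux, and the step I expect to be the main obstacle, is to recognize that the purely algebraic hypothesis \eqref{eqn:condition_1_thm_G_2_rep} already determines $Z$, namely $Z = Y\times X$, so that $\alpha = Z^\flat = \iota_X\iota_Y\varphi$. Indeed, from \eqref{eqn:Cross_Product} one has $\iota_X\iota_Y\varphi = (Y\times X)^\flat$ and $\iota_X\iota_Z\varphi = (Z\times X)^\flat$. Contracting \eqref{eqn:condition_1_thm_G_2_rep} with $X$ gives $(Z\times X)^\flat = \langle X,Y\rangle X^\flat - \|X\|^2 Y^\flat$, while from \eqref{eqn:Double_Cross_Product} and the anticommutativity of $\times$ one computes $(Y\times X)\times X = \langle X,Y\rangle X - \|X\|^2 Y$; hence $(Z - Y\times X)\times X = 0$, and contracting instead with $Y$ gives $(Z - Y\times X)\times Y = 0$. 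Because $Z$ is nowhere zero and $v\mapsto\iota_v\varphi$ is injective, $X_p,Y_p$ are linearly independent at every $p$; since $u\times v = 0$ forces $u,v$ to be linearly dependent, a vector crossing to zero with two independent vectors must vanish, so $Z = Y\times X$.

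With $\alpha = \iota_X\iota_Y\varphi$ in hand, hypothesis \eqref{eqn:condition_2_thm_G_2_rep} becomes $d\alpha = \iota_X\iota_Y(\ast\varphi)$, and the remaining technical work is to simplify the right-hand side. I would apply \eqref{eqn_k_v_with_Hodge_star} twice to obtain $\iota_X\iota_Y(\ast\varphi) = -\ast(X^\flat\wedge Y^\flat\wedge\varphi)$, then use \eqref{eqn:condition_1_thm_G_2_rep} in the form $X^\flat\wedge Y^\flat = -\iota_Z\varphi$ to write $X^\flat\wedge Y^\flat\wedge\varphi = -\varphi\wedge\iota_Z\varphi$, and finally invoke the characterization of $\Lambda^2_7$ in \eqref{eqn:Decomposition_from_G_2_rep}, namely $\ast(\varphi\wedge\beta) = -2\beta$ for $\beta = \iota_Z\varphi\in\Lambda^2_7$. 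This yields $d\alpha = \ast(\varphi\wedge\iota_Z\varphi) = -2\,\iota_Z\varphi = \iota_{-2Z}\varphi$; keeping the Hodge-star signs straight is the only delicate point here, and the precise constant is irrelevant as long as $R := -2Z$ is nowhere zero.

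It remains to read off the conclusions. For the contact condition I would compute $\alpha\wedge(d\alpha)^3 = -8\,Z^\flat\wedge(\iota_Z\varphi)^3$; since $(T_pM,\varphi_p)\cong(\R^7,\varphi_0)$ pointwise and $G_2$ acts transitively on the unit sphere, it suffices to evaluate this in the standard model with $Z = \partial x_1$, where $\iota_Z\varphi_0 = e^{23}+e^{45}+e^{67}$, so $(\iota_Z\varphi_0)^3 = 6\,e^{234567}$ and $Z^\flat\wedge(\iota_Z\varphi)^3 = 6\|Z\|^4\,\mathrm{Vol}$. Thus $\alpha\wedge(d\alpha)^3 = -48\|Z\|^4\,\mathrm{Vol}\neq 0$ and $\alpha$ is a contact form. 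For $A$-compatibility (Definition \ref{def:A-Compatibility}) take $R = -2Z$, so that $d\alpha = \iota_R\varphi$; the Reeb field of $\alpha$ is $Z/\|Z\|^2$, since $\iota_Z\,d\alpha = -2\,\iota_Z\iota_Z\varphi = 0$ and $\alpha(Z/\|Z\|^2) = 1$, and this equals $fR$ for the nowhere-zero function $f = -1/(2\|Z\|^2)$. Hence $\mathrm{Ker}(\alpha)$ is $A$-compatible with $\varphi$.
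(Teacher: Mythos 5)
Your argument is logically sound and reaches the stated conclusions, but it takes a genuinely different route through the central computation, and setting it beside the paper's route is instructive. The paper never identifies $Z$ with $Y\times X$; it computes $\iota_X\iota_Y\varphi=\ast(\beta\wedge\ast\varphi)$ with $\beta=\iota_Z\varphi$ via \eqref{eqn_k_v_no_Hodge_star} and \eqref{eqn_k_v_with_Hodge_star}, then uses the identity $(\iota_v\varphi)\wedge\ast\varphi=3\ast v^{\flat}$ to get $\iota_X\iota_Y\varphi=3Z^{\flat}=3\alpha$; combined with $\iota_X\iota_Y\ast\varphi=-2\,\iota_Z\varphi$ (which you obtain in exactly the same way), this yields $d\alpha=-\tfrac{2}{3}\,\iota_Z\varphi$, and the contact condition is then checked using \eqref{eqn:Key_for_Showing_Contact} together with \eqref{eqn:Metric_Volume} rather than by reduction to the standard model. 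Your algebraic identification $Z=Y\times X$ --- a correct consequence of \eqref{eqn:condition_1_thm_G_2_rep} by the contraction argument you give --- instead yields $\iota_X\iota_Y\varphi=(Y\times X)^{\flat}=Z^{\flat}=\alpha$ and hence $d\alpha=-2\,\iota_Z\varphi$. Your pointwise verification of $\alpha\wedge(d\alpha)^3\neq 0$ via $G_2$-transitivity is also a legitimate alternative to the paper's identity-based computation.

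The factor-of-$3$ discrepancy between $d\alpha=-2\,\iota_Z\varphi$ and the paper's $d\alpha=-\tfrac{2}{3}\,\iota_Z\varphi$ is not a slip in either derivation: both $\iota_X\iota_Y\varphi=3Z^{\flat}$ and $\iota_X\iota_Y\varphi=Z^{\flat}$ follow from \eqref{eqn:condition_1_thm_G_2_rep}, so together they force $Z=0$. This exposes the fact that hypothesis \eqref{eqn:condition_1_thm_G_2_rep} can never be satisfied by nowhere-zero fields: $\iota_Z\varphi$ lies in $\Lambda^2_7$ and, in an adapted basis at a point where $Z\neq 0$, equals $\|Z\|\,(e^{23}+e^{45}+e^{67})$, a $2$-form of rank $6$, whereas $Y^{\flat}\wedge X^{\flat}$ is decomposable and has rank at most $2$ (equivalently, $\|\iota_Z\varphi\|^2=3\|Z\|^2$ while $\|Y^{\flat}\wedge X^{\flat}\|^2=\|Y\times X\|^2=\|Z\|^2$). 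So the theorem is vacuously true; your proof and the paper's are both formally valid derivations from inconsistent hypotheses, and the constant mismatch you would notice on cross-checking is precisely the signal of that inconsistency. A non-vacuous statement in this direction requires weakening the hypothesis on $\iota_Z\varphi$, which is in effect what Definition \ref{def:Contact_G2_structure} and Theorem \ref{thm:Contact_G2_Str-->A_compatible} accomplish.
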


\begin{proof}
From (\ref{eqn:Decomposition_from_G_2_rep}) we know that $\iota _Z \varphi$ is an element of $\Lambda^{2}_{7}$. Set $\iota _Z \varphi =\beta \in \Lambda^{2}_{7}$, and so we have $\iota _Z \varphi=\beta=Y^{\flat} \wedge X^{\flat}$ by (\ref{eqn:condition_1_thm_G_2_rep}).
Also applying (\ref{eqn_k_v_with_Hodge_star}) twice gives
\begin{equation}
\nonumber i_{X}i_{Y}\ast \varphi=-i_{X}(\ast(Y^{\flat} \wedge \varphi))=-\ast(X^{\flat}\wedge Y^{\flat} \wedge \varphi)=\ast(Y^{\flat}\wedge X^{\flat}\varphi)=\ast(\beta \wedge \varphi)
\end{equation}
from which we get
\begin{equation}
i_{X}i_{Y}\ast \varphi=-2\beta
\end{equation}
where we use the second line in (\ref{eqn:Decomposition_from_G_2_rep}). Moreover, by (\ref{eqn_k_v_no_Hodge_star}) followed by (\ref{eqn_k_v_with_Hodge_star}),
\begin{equation}
i_{X}i_{Y}\varphi=i_{X}(\ast (Y^{\flat} \wedge \ast \varphi))=-\ast(X^{\flat} \wedge Y^{\flat} \wedge \ast \varphi)=\ast(\beta \wedge \ast \varphi).
\end{equation}

\noindent Now putting (14) and (15) into (\ref{eqn:condition_2_thm_G_2_rep}) gives us
\begin{equation}
d\ast(\beta \wedge \ast \varphi)=-2\beta=-2\,\iota _Z \varphi.
\end{equation}
Recall the formula $(i_{v}\varphi) \wedge \ast \varphi=3\ast v^{\flat}$ which is true for any vector field $v$. By taking $v=Z$, we compute the left-hand side in (16) as $$d\ast(\beta \wedge \ast \varphi)=d\ast(3 \ast Z^{\flat})=3\,dZ^{\flat}=3\,d\alpha.$$
Combining these together we obtain
\begin{equation}
d\alpha=-\frac{2}{3} \,\iota _Z \varphi.
\end{equation}

\noindent Next, consider the identity (\ref{eqn:Key_for_Showing_Contact})
by taking $\lambda=\varphi$, $v=Z$ and $\mu=\alpha\wedge (d\alpha)^2$: Using (17), we compute the left-hand side as $$(\iota_Z \varphi) \wedge \alpha\wedge (d\alpha)^2=-\frac{3}{2}\alpha\wedge (d\alpha)^3,$$
and the right-hand side as
$$\varphi \wedge \iota_Z (\alpha\wedge (d\alpha)^2)=\alpha(Z) \, \varphi \wedge d\alpha \wedge d\alpha=\frac{4}{9}\|Z\|^2 \, \varphi \wedge (\iota_Z \varphi) \wedge (\iota_Z \varphi).$$
Therefore, by using the identity (\ref{eqn:Metric_Volume}) in the right-hand side, we obtain
$$\alpha\wedge (d\alpha)^3=-\frac{16}{9}\|Z\|^4 \, \textrm{ Vol}.$$
Hence, we conclude that $\alpha\wedge (d\alpha)^3$ is a volume form on $M$ (as being a nonzero function multiple of the volume form Vol on $M$ determined by the metric $g_{\varphi}$).
Equivalently, $\alpha$ is a contact form on $M$. Moreover, it follows from (17) that $(1/\|Z\|^2)Z$ is the Reeb vector field of $\alpha$, i.e., it satisfies (\ref{eqn:Defining_Reeb}). Hence, Ker$(\alpha)$ is an $A$-compatible contact structure on $(M,\varphi)$.

\end{proof}

With the inspiration we get from the proof of Theorem \ref{thm:G2_representation}, we define a new structure on 7-manifolds as follows:

\begin{definition} \label{def:Contact_G2_structure}
Let $M^7$ be a smooth manifold. A \emph{contact$-G_2-$structure} on $M$ is a quintuple  $(\varphi,R,\alpha,f,g)$ where $\varphi$ is a $G_2$-structure, $R$ is a nowhere-zero vector field, $\alpha$ is a 1-form on $M$, and $f,g:M \to \R$ are nowhere-zero smooth functions such that
\begin{itemize}
\item[(i)] $\alpha(R)=f$
\item[(ii)] $d(g\,\alpha)= \iota_R \varphi$.
\end{itemize}
\end{definition}

Observe that we have already seen an example of a contact$-G_2-$structure in the above proof (of course under the assumptions of Theorem \ref{thm:G2_representation}) with $R=Z, \alpha=Z^{\,\flat},f=\|Z\|^2, g\equiv -3/2$. The reason why we call the quintuple $(\varphi, R, \alpha,f,g)$
``contact$-G_2-$structure'' is given by the following theorem.

\begin{theorem} \label{thm:Contact_G2_Str-->A_compatible}
Let $(\varphi, R, \alpha,f,g)$ be a contact$-G_2-$structure on a smooth manifold $M^7$. Then $\alpha$ is a contact form on $M$. Moreover, $\xi=$\emph{Ker}$(\alpha)$ is an A-compatible contact structure on $(M,\varphi)$. In particular, if $M$ is closed, then it does not admit a contact$-G_2-$structure with $d\varphi=0$.
\end{theorem}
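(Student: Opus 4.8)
The plan is to reduce the claim to the computation already performed in the proof of Theorem~\ref{thm:G2_representation}, treating $\beta := g\,\alpha$ as the fundamental $1$-form. I begin with two elementary reductions. Since $g$ is nowhere zero, $\mathrm{Ker}(\alpha)=\mathrm{Ker}(\beta)$, so $\alpha$ is a contact form if and only if $\beta$ is. Writing $d\beta = dg\wedge\alpha + g\,d\alpha$, every monomial in the expansion of $g\,\alpha\wedge(dg\wedge\alpha + g\,d\alpha)^3$ that carries a factor $dg\wedge\alpha$ contains $\alpha$ as a wedge factor at least twice and therefore vanishes; only the term $g\,\alpha\wedge(g\,d\alpha)^3$ survives, so that
$$\beta\wedge(d\beta)^3 = g^4\,\alpha\wedge(d\alpha)^3.$$
Hence it is enough to prove that $\beta\wedge(d\beta)^3$ is a volume form.

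For this I would repeat the interior-product computation of Theorem~\ref{thm:G2_representation} with $\beta$ in place of the $1$-form used there. Since $d\beta = \iota_R\varphi$ by condition (ii), I apply identity (\ref{eqn:Key_for_Showing_Contact}) with $\lambda=\varphi$, $v=R$ and $\mu=\beta\wedge(\iota_R\varphi)^2$. The left-hand side is $(\iota_R\varphi)\wedge\beta\wedge(\iota_R\varphi)^2 = \beta\wedge(d\beta)^3$. For the right-hand side, the relations $\iota_R\beta = \beta(R) = g\,\alpha(R) = gf$ (using (i)) and $\iota_R\iota_R\varphi = 0$ give $\iota_R\bigl(\beta\wedge(\iota_R\varphi)^2\bigr) = gf\,(\iota_R\varphi)^2$, so the right-hand side equals $gf\,\varphi\wedge(\iota_R\varphi)^2$. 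Now (\ref{eqn:Metric_Volume}) with $u=v=R$ yields $\varphi\wedge(\iota_R\varphi)^2 = 6\|R\|^2\,\textrm{Vol}$, and therefore
$$\alpha\wedge(d\alpha)^3 = \frac{6f\|R\|^2}{g^3}\,\textrm{Vol}.$$
As $f,g$ are nowhere zero and $\|R\|^2>0$, the right side is a nowhere-zero multiple of $\textrm{Vol}$, so $\alpha$ is a contact form.

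To establish $A$-compatibility I would verify Definition~\ref{def:A-Compatibility} using the vector field $R$ furnished by the contact$-G_2-$structure. The $1$-form $\beta = g\,\alpha$ is a contact form for $\xi=\mathrm{Ker}(\alpha)$ with $d\beta = \iota_R\varphi$. Moreover $\iota_R\,d\beta = \iota_R\iota_R\varphi = 0$ and $\beta(R) = gf$, so the Reeb vector field of $\beta$ is $\tfrac{1}{gf}R$; with the nowhere-zero function $h := 1/(gf)$, the field $hR$ is exactly the Reeb vector field of the contact form $\beta$ for $\xi$. Thus both requirements of $A$-compatibility are met with this $R$, and $\xi$ is $A$-compatible. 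The final assertion is then immediate: if $M$ were closed with $d\varphi=0$ and admitted a contact$-G_2-$structure, then $\mathrm{Ker}(\alpha)$ would be an $A$-compatible contact structure, contradicting Theorem~\ref{thm:A-Comp_can_not_Closed}.

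The step I expect to be the main obstacle is the reduction $\beta\wedge(d\beta)^3 = g^4\,\alpha\wedge(d\alpha)^3$ — that is, recognizing that the $dg\wedge\alpha$ contributions all cancel and that $\beta$, rather than $\alpha$, is the object to which the machinery of Theorem~\ref{thm:G2_representation} directly applies. Once this is in place, the remaining steps are the same interior-product bookkeeping already used there, and are routine.
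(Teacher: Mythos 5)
Your proposal is correct and follows essentially the same route as the paper: both introduce the auxiliary contact form $g\,\alpha$ (the paper's $\alpha\,'$), apply identity (\ref{eqn:Key_for_Showing_Contact}) with $\lambda=\varphi$, $v=R$ to get $g\alpha\wedge(d(g\alpha))^3=6fg\|R\|^2\,\textrm{Vol}$, and identify $\tfrac{1}{fg}R$ as the Reeb field before invoking Theorem~\ref{thm:A-Comp_can_not_Closed}. The only difference is your extra (correct but unnecessary) computation $\beta\wedge(d\beta)^3=g^4\,\alpha\wedge(d\alpha)^3$; the paper simply notes that $\mathrm{Ker}(\alpha)=\mathrm{Ker}(g\alpha)$ and works with $g\alpha$ throughout.
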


\begin{proof}
We first show that $\alpha$ is a contact form on $M$. Consider the $1$-form $$\alpha\,':=g\,\alpha.$$ Note that $\textrm{Ker}(\alpha)=\textrm{Ker}(\alpha\,')$ as $g$ is a nowhere-zero function. Therefore, if we show that $\alpha\,'$ is a contact form on $M$, then it will imply that so is $\alpha$. The conditions in Definition \ref{def:Contact_G2_structure} translate into $$\alpha\,'(R)=fg \quad \textrm{and} \quad d\alpha\,'= \iota_R \varphi.$$ Also from the equation (\ref{eqn:Metric_Volume}) we get
$$(d\alpha\,')^2 \wedge \varphi=(\iota_R \varphi) \wedge (\iota_R \varphi) \wedge
\varphi=6\|R\|^2\textrm{ Vol}.$$
Now if we write the equation (\ref{eqn:Key_for_Showing_Contact}) by
taking $\lambda=\varphi, \mu=\alpha\,' \wedge (d\alpha\,')^2$ and $v=R$,
then the left-hand side gives $$(\iota_R \varphi) \wedge \alpha\,'
\wedge (d\alpha\,')^2=(d\alpha\,') \wedge \alpha\,' \wedge (d\alpha\,')^2=\alpha\,' \wedge (d\alpha\,')^3,$$ and from the right-hand side we have
$$\varphi \wedge \iota_R (\alpha\,' \wedge (d\alpha\,')^2)=\alpha\,'(R) \,\varphi \wedge (d\alpha\,')^2=fg\,\varphi \wedge (d\alpha\,')^2=6\,fg\|R\|^2\textrm{ Vol}.$$
Therefore, we conclude $$\alpha\,' \wedge (d\alpha\,')^3=6\,fg\|R\|^2\textrm{ Vol}$$ which implies that $\alpha\,'$ (and so $\alpha$) is a contact form on $M$ as $6\,fg\|R\|^2$ is a nowhere-zero function on $M$.

Next, we consider the vector field $R\,'=(1/fg)R$. Clearly, $\alpha\,'(R\,')=1$. Also we compute
$$\iota_{R\,'} d\alpha\,'=(1/fg)\,\iota_R d\alpha\,'=(1/fg)\,\iota_R (\iota_R \varphi)=0$$ as $\varphi$ is skew-symmetric. Therefore, $R\,'$ is the Reeb vector field of $\alpha\,'$, and so $\xi=\textrm{Ker}(\alpha\,')$ is an A-compatible contact structure on $(M,\varphi)$ by definition. Finally, the last statement now follows from Theorem \ref{thm:A-Comp_can_not_Closed}.

\end{proof}

The next result shows that we can go also in the reverse direction.

\begin{theorem} \label{thm:A_compatible-->Contact_G2_Str}
Let $(M,\varphi)$ be any manifold with $G_2$-structure. Then every A-compatible contact structure on $(M,\varphi)$ determines a contact$-G_2-$structure on $M$.
\end{theorem}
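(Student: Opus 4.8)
The plan is to take the data furnished by $A$-compatibility and feed it almost verbatim into Definition \ref{def:Contact_G2_structure}, the one genuinely nontrivial point being the nowhere-vanishing of a single scalar function. By hypothesis there is a vector field $R$ on $M$, a nowhere-zero function $f_0\colon M\to\R$, and a contact form $\alpha$ for $\xi$ with $d\alpha=\iota_R\varphi$, while $f_0R$ is the Reeb vector field of some contact form $\beta$ for the same contact structure $\xi$. My first step is to record that $R$ is nowhere-zero: a Reeb vector field never vanishes, so $f_0R$ is nowhere-zero, and since $f_0$ is nowhere-zero so is $R$.

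Next I would try the obvious candidate quintuple $(\varphi,R,\alpha,\alpha(R),1)$. Condition (ii) of Definition \ref{def:Contact_G2_structure} is immediate: with $g\equiv 1$ we have $d(g\,\alpha)=d\alpha=\iota_R\varphi$, which is exactly the defining equation supplied by $A$-compatibility. Condition (i) demands $\alpha(R)=f$ for a \emph{nowhere-zero} function $f$, so setting $f:=\alpha(R)$ the only remaining task is to verify that $\alpha(R)$ vanishes nowhere. This is the crux of the argument.

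To establish the nonvanishing I would use that $\alpha$ and $\beta$ are two contact forms for one and the same (co-oriented) contact structure $\xi=\mathrm{Ker}(\alpha)=\mathrm{Ker}(\beta)$, hence differ by a nowhere-zero conformal factor: $\beta=h\,\alpha$ for some smooth $h\colon M\to\R$ that is nowhere-zero. Evaluating the Reeb normalization $\beta(f_0R)=1$ then gives $h\,f_0\,\alpha(R)=1$, whence $\alpha(R)=1/(h\,f_0)$, which is nowhere-zero precisely because $h$ and $f_0$ are. This pins down $f=\alpha(R)=1/(h\,f_0)$ as a legitimate nowhere-zero function and completes the verification of (i), so $(\varphi,R,\alpha,\alpha(R),1)$ is a contact$-G_2-$structure on $M$.

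The hard part, such as it is, is exactly this final nonvanishing check; everything else is a direct transcription of the two definitions with the choice $g\equiv 1$. I would close by noting that the motivating example on $\R^7$ already illustrates the mechanism: there $\alpha=\beta$, $f_0=1$, and $R$ is the Reeb field of $\alpha$, so $h\equiv 1$ and $f=\alpha(R)=1$, recovering the quintuple $(\varphi_0,R,\alpha_0,1,1)$. The general argument above is merely the bookkeeping required when the contact form witnessing $d\alpha=\iota_R\varphi$ is not literally the one whose Reeb field is $f_0R$.
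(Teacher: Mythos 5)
Your proposal is correct and follows essentially the same route as the paper: take the quintuple $(\varphi,R,\alpha,\alpha(R),1)$ and reduce everything to showing $\alpha(R)$ is nowhere zero, which both you and the authors deduce from the fact that the Reeb field $f_0R$ of some contact form for $\xi$ is transverse to $\xi=\mathrm{Ker}(\alpha)$. Your phrasing via the conformal factor $\beta=h\,\alpha$ is just a more explicit rendering of the paper's transversality argument, so no substantive difference.
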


\begin{proof} Let $\xi$ be a given A-compatible contact structure on $(M,\varphi)$. By definition, there exist a non-vanishing vector field $R$ on $M$, a contact form $\alpha$ for $\xi$ and a nowhere-zero function $h:M \to \R$ such that $d\alpha=\iota_R \varphi$ and $hR$ is the Reeb vector field of some contact form (possibly different than $\alpha$) for $\xi$. Being a Reeb vector field, $hR$ is transverse to the contact distribution $\xi$. Therefore, $R$ is also transverse to $\xi$ because $h$ is nowhere-zero on $M$. As a result, there must be a nowhere-zero function $f:M\to\R$ such that $$\alpha(R)=f.$$ To check this, assume, on the contrary, that the function $M \to \R$ given by $x \mapsto \alpha_x(R_x)$ has a zero, say at $p$. So, we have $\alpha_p(R_p)=0$ which means that $R_p \in \textrm{Ker}(\alpha_p)=\xi_p\,$. But this contradicts to the fact that $R$ is everywhere transverse to $\xi$. Hence, we obtain a contact$-G_2-$structure $(\varphi,R,\alpha,f,1)$. This finishes the proof.

\end{proof}

%
%

\section{Some examples} \label{sec:Examples}
In this final section, we give some examples of $G_{2}$-manifolds
admitting $A$-compatible contact structures. In fact, by Theorem \ref{thm:A_compatible-->Contact_G2_Str}, in each example we will also have a corresponding contact$-G_2-$structure.

\vspace{.1in}

\subsection{$CY \times S^{1}$ (or $CY \times \R$)} Consider a well-known example of $G_{2}$-manifold
$(CY\times S^{1}, \varphi)$ where we assume $CY(\Omega, \omega)$ is a 3-fold Calabi-Yau manifold which is either noncompact or compact with boundary. Assume K\"ahler form $\omega$ on $CY$ is exact, i.e. $\omega=d\lambda$ for some $\lambda \in \Omega^{1}( CY)$ and set $\alpha=dt + \lambda$ where $t$ is the
coordinate on $S^1$. Then $\alpha \wedge (d\alpha)^3=\omega^3 \wedge
dt$ is a volume form, and so $\alpha$ is a contact 1-form on
$CY\times S^{1}$. Moreover, $\partial t$ is the Reeb vector field of
$\alpha$ as $\iota_{\partial t} \alpha=1$ and $\iota_{\partial t}
d\alpha=\iota_{\partial t} \omega=0$. Also observe that since
$\varphi=Re(\Omega) + \omega \wedge dt$ (see \cite{Joyce}, for
instance), we compute

\begin{center}
$\iota_{\partial t} \varphi=\iota_{\partial t}(Re(\Omega)+ \omega
\wedge dt)=\iota_{\partial t}Re(\Omega)+ \iota_{\partial t}(\omega
\wedge dt)=\omega \iota_{\partial t}dt=\omega=d\lambda=d\alpha$.
\end{center}

Thus, $\xi=$ Ker$(\alpha)$ is an A-compatible contact structure on
($CY\times S^{1}, \varphi)$, or in other words, $(\varphi,\partial t, \alpha, 1,1)$ is a contact$-G_2-$structure on $CY\times S^{1}$. We note that, by considering $t$ as a
coordinate on $\R$, the above argument also gives a contact$-G_2-$structure on $CY \times \R$.

\vspace{.1in}

\subsection{$W \times S^{1}$ (or $W \times \R$)} We now give a special case of the above example. First, we need some definitions: A \emph{Stein manifold} of complex dimension $n$ is a triple $(W^{2n},J,\psi)$ where $J$ is a complex structure on $W$ and $\psi: W
\rightarrow \R$ is a smooth map such that the $2-$form
$\omega_\psi=-d(d\psi \circ J)$ is non-degenerate (and so an exact
symplectic form) on $W$. Indeed, $(W,J,\omega_{\psi})$ is an exact
K\"ahler manifold. We say that $(M^{2n-1},\xi)$ is \emph{Stein
fillable} if there is a Stein manifold $(W^{2n},J,\psi)$ such that
$\psi$ is bounded from below, $M$ is a non-critical level of $\psi$,
and $-(d\psi\circ J)$ is a contact form for $\xi$.

\vspace{.1in}

Next, consider a parallelizable Stein manifold $(W,J,\psi)$ of
complex dimension three. By a result of \cite{Forster}, we know that
$c_1(W,J)=0$, i.e., the first Chern class of $(W,J)$ vanishes.
Therefore, $W$ admits a Calabi-Yau structure with associated
K\"ahler form $\omega_\psi=-d(d\psi \circ J)$. Let $\Omega$ be the
non-vanishing holomorphic $3$-form on $W$ corresponding to this
Calabi-Yau structure. Then by the previous example, $(W\times S^{1},
\varphi)$ is a $G_2$-manifold with $\varphi=Re(\Omega) +
\omega_{\psi} \wedge d\theta$ (where $\theta$ is the coordinate on
$S^1$), $\alpha=d\theta-(d\psi \circ J)$ is a contact 1-form on
$W\times S^{1}$ with the Reeb vector field $\partial \theta$, and
$\xi=$ Ker$(\alpha)$ is an A-compatible contact structure on
($W\times S^{1}, \varphi)$. Again by considering $\theta$ as a
coordinate on $\R$, we obtain an A-compatible contact structure on
$(W \times \R, \varphi)$. Note that the corresponding contact$-G_2-$structure in both cases is $(\varphi,\partial \theta, \alpha, 1,1)$.

Now consider the unit disk $\D^2 \subset \C$. Then $(W \times \D^2,J
\times i,\psi+|z|^2)$ is a Stein manifold where $i$ is the usual
complex structure and $z=re^{i\theta}$ is the coordinate on $\C$.
Let $\eta$ be the induced contact structure on the boundary
$$\partial (W \times \D^2)=(\partial W \times \D^2) \cup (W \times
S^1).$$ Then we remark that the restriction of the Stein fillable structure $\eta$ on $W \times
S^1$ is the contact structure $\xi$ constructed above.

\vspace{.12in}

\subsection{$\mathbb{R}^{3} \times K^{4}$} Let $K$ be a K\"ahler manifold with an exact K\"ahler form $\omega$, i.e. $\omega=d\lambda$ for some
$\lambda \in \Omega^{1}(K)$. Note that $K$ is either noncompact or compact with boundary.
Consider the $G_{2}$-manifold
$\mathbb{R}^{3} \times K^{4}$ with the $G_{2}$-structure
$$\varphi=dx_{1}dx_{2}dx_{3}+\omega \wedge dx_{1}+Re(\Omega) \wedge
dx_{2}-Im(\Omega) \wedge dx_{3}$$ where $(x_1,x_2,x_3)$ are the
coordinates on $\R^3$ (see \cite{Joyce}). Then $\alpha=dx_{1}+x_{2}dx_{3}+\lambda$ is a
contact 1-form as $\alpha \wedge (d\alpha)^3=dx_{1} dx_{2} dx_{3}
\wedge \omega^2$ is a volume form on $\mathbb{R}^{3} \times K^{4}$.
One can easily check that $\partial x_{1}$ is the Reeb vector field
of $\alpha$. Furthermore,
\begin{eqnarray}
\nonumber i_{\partial x_{1}} \varphi &=& i_{\partial
x_{1}}(dx_{1}dx_{2}dx_{3}+\omega \wedge dx_{1}+\omega \wedge
dx_{2}+\omega \wedge
dx_{3})\\
\nonumber &=&dx_{2}dx_{3}+i_{\partial x_{1}}(\omega
dx_{1})=dx_{2}dx_{3}+ \omega = d(x_{2}dx_{3}+\lambda)=d\alpha
\end{eqnarray}
Hence, $\xi=$ Ker$(\alpha)$ is an A-compatible contact structure on
($\mathbb{R}^{3} \times K^{4}, \varphi)$ and the corresponding contact$-G_2-$structure on $\mathbb{R}^{3} \times K^{4}$ is $(\varphi,\partial x_1, \alpha, 1,1)$.

\vspace{.12in}

\subsection{$T^*M^3 \times \R$} Let $M$ be any oriented Riemannian $3$-manifold and $T^*M$ denote the cotangent bundle of $M$. It is shown in \cite{Cho-Salur-Todd} that $T^*M \times \R$ has a $G_2$-structure $\varphi$ with $d\varphi=0$. To describe $\varphi$, let $(x_1,x_2,x_3)$ be local coordinates on $M$ around a given point, and consider the corresponding standard local coordinates $(x_1,x_2,x_3,\xi_1,\xi_2,\xi_3)$ on the cotangent bundle $T^*M$. These define the standard symplectic structure $\omega=-d \lambda$ on $T^*M$ where $\lambda=\Sigma_{i=1}^3 \xi_i dx_i$ is the tautological 1-form on $T^*M$. Let $t$ denote the coordinate on $\R$. Then $\varphi=Re(\Omega) - \omega \wedge dt$ where $\Omega=(dx_1 + id\xi_1)\wedge (dx_2 + id\xi_2)\wedge (dx_3 + id\xi_3)$ is the complex-valued $(3,0)$-form on $M$. On the other hand, the $1$-form $\alpha=dt+\lambda$ is a contact form on $T^*M \times \R$ with the Reeb vector field $\partial t$. Now it is straightforward to check that $\xi=\textrm{Ker}(\alpha)$ is an $A$-compatible contact structure on $(T^*M \times \R,\varphi)$ and also that $(\varphi,\partial t, \alpha, 1,1)$ is the corresponding contact$-G_2-$structure on $T^*M \times \R$.


\clearpage

\end{document}